\documentclass[a4paper,UKenglish,cleveref, autoref, thm-restate]{lipics-v2021}

\pdfoutput=1 
\hideLIPIcs 

\usepackage{graphicx}
\usepackage{amssymb,amsthm,amsmath,amsfonts,amstext,mathrsfs}
\usepackage{latexsym}
\usepackage{url,scalerel}
\usepackage{pdfpages}
\usepackage{tikz}
\usepackage{wrapfig}

\hypersetup{colorlinks=true,
linkcolor=blue,
citecolor=blue,
urlcolor=blue,
filecolor=blue,
bookmarksnumbered=true,
pdfstartview=FitH,
pdfhighlight=/N}

\newtheorem*{thmm}{Theorem}

\newtheorem{lem}{Lemma}

\makeatletter
\def\dual#1{\expandafter\dual@aux#1\@nil}
\def\dual@aux#1/#2\@nil{\begin{tabular}{@{}l@{}}#1\\#2\end{tabular}}
\makeatother

\input xy
\xyoption{all}

\title{Maximal subsets of $n\times n$ board without cycles}

\author{Giedrius Alkauskas}{Vilnius University, Department of mathematics and Informatics,\\
	Institute of Computer Science, Naugarduko 24, LT-03225 Vilnius, Lithuania} {giedrius.alkauskas@mif.vu.lt}{https://orcid.org/0000-0002-5747-363X}{}
\authorrunning{G. Alkauskas}
\titlerunning{Acyclic sets}

\Copyright{Giedrius Alkauskas}

\ccsdesc[500]{Primary 05C35, 05C69;
	Secondary 05C38, 05C10, 05B50}

\keywords{Feedback vertex set, decycling number,
	square grid, induced forest, closed rook paths,
	extremal combinatorics}

\nolinenumbers
\hideLIPIcs{}

\usepackage{numprint}

\usepackage{mathtools}
\newcommand{\DeclareAutoPairedDelimiter}[3]{%
	\expandafter\DeclarePairedDelimiter\csname Auto\string#1\endcsname{#2}{#3}%
	\begingroup\edef\x{\endgroup
		\noexpand\DeclareRobustCommand{\noexpand#1}{%
			\expandafter\noexpand\csname Auto\string#1\endcsname*}}%
	\x}

\begin{document}
	\maketitle
\begin{abstract}
Let \(n\in\mathbb N\). Define \(\nabla(n)\) the least possible number of cells that must be removed from the \(n\times n\) square board in order to leave no cycles. Put $L_n=\left\lceil\left((n-1)^2+1\right)/3\right\rceil$. Luccio proved the general lower bound \(\nabla(n)\geq L_n\). Constructions of Luccio, Madelaine--Stewart, and Lien--Fu--Shih imply that $\nabla(n)=L_n$ for $n\equiv 1,4\pmod 6$, and in the remaining classes it is reduced to deciding whether \(\nabla(n)=L_n\) or \(\nabla(n)=L_n+1\). In this paper, we provide optimal constructions in the odd residue classes \(n\equiv3,5\pmod6\), (proving  $\nabla(n)=L_n$) and a geometric obstruction showing that equality in lower bound is impossible when \(n\equiv0,2\pmod6\) (yielding $\nabla(n)=L_n+1$). Starting with \(n=3\), the resulting sequence is given by
	\[
	\nabla(n)=\bigg{\lceil}\frac{n^2}{3}-\frac{n}{6}+\frac{4}{3}\bigg{\rceil}-\bigg{\lceil}\frac{n}{2}\bigg{\rceil}.
	\]
which is now listed in the OEIS as A354673.
\end{abstract}

\section{Previous results}
\footnote{Date: \today. First version of the manuscript: June 02, 2022}
The problem considered here is the square-grid instance of the feedback
vertex set problem \cite{weisstein}. Alternatively, this may be call {decycling number}. It is the smallest number of vertices needed to remove from graph in order to leave no cycles. For the \(m\times n\) grid graph, denote this number by $\nabla(m,n)$, and let \(\nabla(n)=\nabla(n,n)\).

The current paper aims at two targets. First, to extract all results for $\nabla(m,n)$  already existing in the literature and to specialize them to $m=n$. Second, in cases where the gap still remains, determine the exact value.

For $m\times n$ board, Luccio proved the general lower bound
\[
\nabla(m,n)\geq
\left\lceil\frac{(m-1)(n-1)+1}{3}\right\rceil.
\]
Thus, $\nabla(n)\geq
\left\lceil((n-1)^2+1)/3\right\rceil:=L_n$. We will re-derive the latter bound, for it will become crucial to express the excess $3L_n-(n-1)^2$  in terms of intrinsic graph parameters.\\

For the upper bound, one usually provides an explicit construction. If $n=2^r+1$, $r\geq 1$, Luccio gave an optimal recursive procedure matching the lower bound \cite{Luccio}. This construction can be easily extrapolated from a special example $n=2^3+1$ (see Figure \ref{ex2}, $n=9$). Madelaine and Stewart subsequently gave explicit examples for rectangular grids. In many congruence classes their constructions attain Luccio's lower bound, and in all remaining covered
cases they are within two of it \cite{MadelaineStewart}. In particular,
when specialized to square grids, their constructions settle the classes
\(n\equiv1,4\pmod6\), where the lower bound gives the correct answer \(\nabla(n)=L_n\). 

 In the other remaining residue classes for square gris their constructions leave a gap: for \(n\equiv0,2,3\pmod6\)
they give \(\nabla(n)\leq L_n+1\), while for \(n\equiv5\pmod6\) their direct
construction gives only \(\nabla(n)\leq L_n+2\).  Later, Lien, Fu and Shih
improved the general upper bound for grids, showing that for all
 \(m,n\geq 6\) one has
\[
\nabla(m,n)\leq
\left\lceil\frac{(m-1)(n-1)+1}{3}\right\rceil+1.
\]
Consequently, for square grids the remaining uncertainty is at most one: $L_n\leq \nabla(n)\leq L_n+1$, apart from small values, which can be checked directly
\cite{LienFuShih}.\\

Getting ahead of ourselves with a final answer in mind, the square case may therefore be summarized as follows:
\begin{eqnarray*}
\begin{array}{c|c|c}
	n\pmod6 & \text{previous status} & \text{new ingredient needed} \\ \hline
	0 & L_n\leq \nabla(n)\leq L_n+1
	& \text{prove }\nabla(n)\ne L_n \\[1mm]
	1 & \nabla(n)=L_n
	& \text{none} \\[2mm]
	2 & L_n\leq \nabla(n)\leq L_n+1
	& \text{prove }\nabla(n)\ne L_n \\[1mm]
	3 & L_n\leq \nabla(n)\leq L_n+1
	& \text{construct a set of size }L_n \\[1mm]
	4 & \nabla(n)=L_n
	& \text{none} \\[1mm]
	5 & L_n\leq \nabla(n)\leq L_n+1
	& \text{construct a set of size }L_n.
\end{array}
\end{eqnarray*}
(Classes $n\equiv 3,5\pmod 6$ both contain subclasses $n=2^{2r+1}+1$ and $n=2^{2r}+1$ where the sharp result was known.)
In the current paper, we add exactly the missing ingredients.  For
\(n\equiv3,5\pmod6\) the construction attaining $L_n$ is presented.  For \(n\equiv0,2\pmod6\), \(n\geq3\), we prove a geometric
obstruction showing that \(\nabla(n)=L_n\) is impossible.  Together with small initial boards, this completes the problem for square gird.
\begin{thmm}For $n\geq 3$, one has 
	\begin{eqnarray*}
		\nabla(n)=\bigg{\lceil}\frac{n^2}{3}-\frac{n}{6}+\frac{4}{3}\bigg{\rceil}-\bigg{\lceil}\frac{n}{2}\bigg{\rceil}.
	\end{eqnarray*}
\end{thmm}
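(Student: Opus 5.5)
Concretely, the plan is to show that the closed formula equals $L_n$ for $n\equiv1,3,4,5\pmod 6$ and $L_n+1$ for $n\equiv0,2\pmod6$. Then three ingredients finish the proof: the prior results (Luccio's bound, the Madelaine--Stewart and Lien--Fu--Shih upper bounds), new constructions for $n\equiv3,5$, and an obstruction for $n\equiv0,2$.

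\textbf{Step 1 (arithmetic).} Write $n=6k+r$ and evaluate both sides in each of the six classes. For example, with $n=6k$ we get $L_n=\lceil(36k^2-12k+2)/3\rceil=12k^2-4k+1$, while the formula gives $\lceil 12k^2-k+4/3\rceil-3k=12k^2-4k+2=L_n+1$. With $n=6k+1$ we get $L_n=12k^2+1$, and the formula gives $\lceil 12k^2+4k+1/3+1/6\cdot(-1)+\dots\rceil-(3k+1)$, which is checked to equal $12k^2+1$. The remaining classes are handled in the same routine way. Small boards ($n<6$, where Lien--Fu--Shih does not apply) are verified by hand or by computer.

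\textbf{Step 2 (lower bound and its excess).} Re-derive Luccio's bound by double counting. Let $S$ be the removed set and $F$ the remaining induced forest on $n^2-|S|$ vertices with $c$ components. Then $e(F)=n^2-|S|-c$. Each removed cell kills at most $4$ edges, boundary cells fewer, and edges between two removed cells are counted twice. Comparing with the total $2n(n-1)$ gives an identity of the form
\[3|S|=(n-1)^2+1+(c-1)+(\text{boundary deficit})+e(S)+\dots,\]
where $e(S)$ counts edges inside $S$. So $|S|=L_n$ forces the total excess to be at most $3L_n-(n-1)^2-1$. This equals $0$ when $(n-1)^2\equiv2\pmod3$, i.e. for $n\equiv0,2\pmod3$. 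The obstruction must therefore show that the extremal structure is impossible for even $n$ in those classes: $F$ is a single tree, $S$ is independent, and there is an exactly prescribed boundary contribution (corners and edge cells of $S$ have tight counts).

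\textbf{Step 3 (obstruction, main obstacle).} Assume equality for $n\equiv0,2\pmod6$. From the tight identity, each boundary row must contain a prescribed pattern of removed cells; tightness forces the boundary to alternate in a rigid way. I would try a parity/colouring argument. Colour the board in a chessboard pattern and also consider the four side lengths, and show that tightness forces the removed boundary cells to lie at positions $\equiv$ fixed residues mod $2$ (or mod $3$) along each side, propagating around the corners. For even $n$, going around the corners gives an inconsistency, so some corner region would have to create either a cycle or an extra component. I expect this step to be the hardest, because one must pin down local configurations near the corners exactly. If parity alone is insufficient, a fallback is to use planar duality: faces of the grid containing no removed cell would form a cycle, so every unit $2\times2$ square must meet $S$. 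This gives a covering condition, and one counts $2\times2$ squares covered exactly once versus more than once.

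\textbf{Step 4 (constructions).} For $n\equiv3,5\pmod6$, give explicit periodic patterns: diagonal stripes of removed cells with period $3$, which achieve density $1/3$ of interior squares, plus carefully designed boundary and corner patches adapted to each residue. Then verify acyclicity, i.e. that every $2\times2$ square and every larger enclosed region is broken, and connectivity, and count that $|S|=L_n$. The count reduces to matching the identity of Step 2 with zero or minimal excess.
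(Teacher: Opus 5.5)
Your proposal has genuine gaps exactly where the paper's new content lies: the obstruction for $n\equiv0,2\pmod6$ and the constructions for $n\equiv3,5\pmod6$.

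\textbf{The obstruction.} The slack computation in Step 2 is wrong. A square is never $\equiv2\pmod3$. For $n\equiv0,2\pmod3$ one has $(n-1)^2\equiv1$, so $3L_n=(n-1)^2+2$. The identity $3s=(n-1)^2+b+2c+u+T-1$ (with $b,c$ the non-corner and corner boundary deletions, $u$ the adjacent deleted pairs, $T$ the number of trees) therefore leaves slack exactly $1$, not $0$. Equality permits four configurations:
\begin{itemize}
\item one corner deletion with $u=0$, $T=1$;
\item one non-corner boundary deletion with $(u,T)=(1,1)$;
\item one non-corner boundary deletion with $(u,T)=(0,2)$;
\item two non-corner boundary deletions with $u=0$, $T=1$.
\end{itemize}
In particular, your picture of boundary rows carrying a rigid alternating pattern of deletions is false: the boundary carries at most two deletions.

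The first three cases fall quickly. A fully white side together with its two flanking cells forces the adjacent column to alternate grey/white with white ends, which is impossible for even $n$ without an adjacent grey pair. The fourth case is the real obstruction, and your Step 3 never reaches it. Your fallback of counting $2\times2$ blocks hit once or twice is the right first move; it gives exactly $s-2$ doubly-hit blocks. On its own, however, it is parity-blind and is satisfied by the optimal odd boards, e.g. $n=11$.

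The missing idea is a topological parity argument:
\begin{itemize}
\item The graph $K$ joining diagonally opposite deleted cells of a block is a non-crossing forest. A cycle would enclose a white cell cut off from the corners, contradicting $T=1$. Hence $K$ has exactly two components.
\item These components contain the two boundary deletions, which must sit next to opposite corners. So all deletions lie in one checkerboard class $Q$.
\item The $Q$-cells of the inner frame are then forced grey, and their component labels change exactly twice around the frame.
\item Consider the graph on white $Q$-cells that joins the two opposite white neighbours of each interior non-$Q$ cell seeing both components. It has exactly two odd-degree vertices, located at these transitions.
\item Its edges move by $(\pm2,0)$ or $(0,\pm2)$. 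For even $n$, the two odd vertices lie in different sublattices ($i$ odd, $j$ even versus $i$ even, $j$ odd), contradicting the handshaking lemma.
\end{itemize}
This is where evenness of $n$ genuinely enters. ``Going around the corners gives an inconsistency'' is not a substitute for it.

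\textbf{The constructions.} Step 4 is not yet a construction. For $n\equiv3,5\pmod6$ the slack is also only $1$, so an optimal set must satisfy $b+2c\le2$, $u\le1$ and $T\le2$: essentially a spanning tree with independent deletions. Period-$3$ diagonal stripes $i+j\equiv0\pmod3$ do hit every $2\times2$ block. However, the white cells split into about $2n/3$ disjoint staircase paths, and about $4n/3$ boundary cells are deleted. So the count exceeds $L_n$ by roughly $2n/3$, and no boundary or corner patch repairs the interior disconnection. The paper instead exhibits explicit optimal boards for $n=9,11,15$ and a local surgery effecting $n\to n+6$. You need a concrete pattern of this kind, with acyclicity and connectivity actually verified.

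The remaining ingredients are fine: Luccio's bound and the cited upper bounds for the other classes. Step 1 is fine too, except that your $n=6k+1$ line is garbled; it should read $\lceil 12k^2+3k+3/2\rceil-(3k+1)=12k^2+1$.
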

\section{Background}
For $n\geq 2$ being a positive integer, consider the $n\times n$ board divided into $n^2$  unit cells. In 1965, S. Golomb asked a question (later popularized by Martin Gardiner) asking the following: how many monominoes need to be removed from the $8\times 8$ board in order to exclude any X-pentamino? The latter is a figure consisting of 5 cells shaped as a Greek cross. The answer is $10$. The example is given on the Figure \ref{fig:test2} (for more on this, see \cite{golomb}, Chapter 3). This leads to an analogous question for a general $n\times n$ grid graph, giving rise to the sequence $1, 2, 3, 4, 7, 10, 12, 16, 20, 24, 29, 35, 40, 47,\ldots$ (the offset is $n=3$; sequence A104519). The term $a_{n+2}=\gamma_{n}$ can also be interpreted as the domination number (size of minimum dominating set) in the $n\times n$ grid graph. Numerical calculations for $n$ up to $29$ are given in \cite{alanko}, where values for $n>19$ are due to the authors. Finally, Gon\c{c}alves at al. \cite{goncalves} proved Chang's conjecture, which (in particular) claimed that for $n\geq 16$,
\begin{eqnarray*}
\gamma_{n}=\bigg{\lfloor}\frac{(n+2)^2}{5}\bigg{\rfloor}-4.
\end{eqnarray*}

A closely related sequence, given by $n^2-\gamma_{n}$, was investigated in \cite{khovanova}. It is defined as a largest possible number in a saturated domino covering of a $n\times n$ board. A covering in dominoes is called \emph{saturated}, if removing any one of them leaves at least one cell uncovered (sequence A193764).\\

In general, there are quite a few integer sequences $a_{n}$ which have the following two characterisations:
\begin{itemize}
\item[1.]{\textit{Definition}.} They count the minimum (maximum) number of marked cells in the $n\times n$ grid graph (removed, distinguished, centers of trominoes, X-pentamineos etc.), such that the modified graph has the needed pre-defined property; 
\item[2.]\textit{{Formula}.} The proved (or conjectured) formula for this sequence obeys the asymptotics $a_{n}\sim r\cdot n^{2}$ for a certain $r\in\mathbb{Q}$.
\end{itemize}
Such sequences include A321684 (which differs from
$A104519(n+2) $by just two members, see \cite{crevals}),
A239072, A239231, A085577, A226918, A238381,
A288425, A152125, A322125, and more.

Further examples are A331968, A357357 and A357234
(maximal induced paths and cycles in square grids),
A369692 and A302488 (connected and total domination),
and A280984 (minimum size of a maximal domino packing).
Related square-board problems involving king or knight
adjacency give rise to A075561, A302401, A370428,
A000982, A357501, A357500 and A165143.
Another interesting example is A250000, concerning
peaceable armies of queens, for which the conjectured
quadratic coefficient is $7/48$ per army. The last example 
was the winner in the competition which sequence is to be assigned number 250.000 \cite{sloane}.
\begin{figure}[!htbp]
	\centering
	
	\begin{minipage}{0.46\textwidth}
		\centering
		\begin{tikzpicture}[x=7mm,y=7mm]
			
			\draw[step=1,black!75,line width=.35pt] (0,0) grid (7,7);
			\draw[black!85,line width=.5pt] (0,0) rectangle (7,7);
			
			\foreach \r in {1,...,7}{
				\foreach \c in {1,...,6}{
					\draw[black!35,line width=.45pt]
					(\c-0.5,{7.5-\r}) -- (\c+0.5,{7.5-\r});
				}
			}
			\foreach \c in {1,...,7}{
				\foreach \r in {1,...,6}{
					\draw[black!35,line width=.45pt]
					(\c-0.5,{7.5-\r}) -- (\c-0.5,{6.5-\r});
				}
			}
			
			\foreach \r in {1,...,7}{
				\foreach \c in {1,...,7}{
					\filldraw[fill=gray!55,draw=black!90,line width=.35pt]
					(\c-0.5,{7.5-\r}) circle (0.09);
				}
			}
			
		\end{tikzpicture}
			\captionof{figure}{Grid graph for $n=7$}
			\label{bas}
	\end{minipage}
	\hfill
	\begin{minipage}{0.46\textwidth}
		\centering
		\begin{tikzpicture}[x=7mm,y=7mm]
			
			\foreach \r/\c in {
				2/3,2/6,
				3/3,3/6,
				5/2,5/4,5/5,5/7,
				7/3,7/6
			}{
				\fill[gray!100]
				({\c-1},{8-\r})
				rectangle
				({\c},{9-\r});
			}
			
			\draw[step=1,black!75,line width=.35pt] (0,0) grid (8,8);
			\draw[black!85,line width=.5pt] (0,0) rectangle (8,8);
			
		\end{tikzpicture}
			\caption{No X-pentamino fits}
				\label{fig:test2}
	\end{minipage}
\end{figure}

\section{The proof}
\label{sec-pr}
Consider the grid graph  $\Gamma_{n}=(V_{n}, E_{n})$ (see Figure \ref{bas}). It has $n^2$ vertices and $2n(n-1)$ edges. Its vertices split into three sets, according to their degree. The latter can be equal to $4$, $3$, or $2$. Let us call these sets, respectively,  $\mathcal{A}$, $\mathcal{B}$, and $\mathcal{C}$. One has $|\mathcal{A}|=(n-2)^2$, $|\mathcal{B}|=4(n-2)$, $|\mathcal{C}|=4$.
Recall that a \emph{tree} is a connected graph without cycles. For a tree $\gamma=(v,e)$, one has $|e|=|v|-1$. A \emph{forest} is an acyclic graph, which then necessarily is a disjoint union of trees. If  $\gamma=(v,e)$ is a forest with $t$ trees, one has $|e|=|v|-t$.\\

Suppose now that the graph $\Gamma=(V,E)$ is obtained from the grid graph $\Gamma_{n}$ by removing the set $\mathcal{X}$ consisting of $s$ vertices with all incident edges. Assume $\Gamma$ is acyclic and consists of $T$ trees. Then
\begin{eqnarray}
|E|=|V|-T.
\label{acy}
\end{eqnarray}
Put $b=|\mathcal{X}\cap\mathcal{B}|$ and $c=|\mathcal{X}\cap\mathcal{C}|$. Then  $|\mathcal{X}\cap\mathcal{A}|=s-b-c$. Further, suppose the total number of pairs of adjacent cells in $\mathcal{X}$ is $u$ (three consecutive cells count as two pairs, four as twwo, and so on). By a direct calculation, $|V|=n^2-s$, $|E|=2n^2-2n-4s+b+2c+u$. According to (\ref{acy}), one has $2n^2-2n-4s+b+2c+u=n^2-s-T$.
And so,
\begin{eqnarray}
s=\frac{(n-1)^2+b+2c+u+T-1}{3}.
\label{form}
\end{eqnarray}
Now, one cell in $\mathcal{X}$ must lie on the outer border, otherwise there existed a cycle going round the whole board. This implies $b+c\geq 1$. Also, trivially, $T\geq 1$, $u\geq 0$. Therefore,
\begin{eqnarray*}
s\geq \bigg{\lceil}\frac{(n-1)^2+1}{3}\bigg{\rceil}.
\end{eqnarray*}

\section{Cases $n-1=6k\pm1$}

Suppose now $n=6k$ or $n=6k+2$, with $k\geq1$. Assume that equality
holds in the lower bound, so $s=L_n$. Since $(n-1)^2\equiv1\pmod3$,
we must have $3s=(n-1)^2+2$. Comparison with (\ref{form}) then gives
\begin{equation}
	b+2c+u+T-1=2.
	\label{form-n}
\end{equation}
We will see that this leads to a contradiction. Since $b+c\geq1$,
$T\geq1$ and $u\geq0$, there are three possibilities to explore.

\medskip
\noindent\textbf{i)} $(c,b,u,T)=(1,0,0,1)$. There are, accordingly,
no other grey cells on the border. Therefore, one can find a whole side
of the board which is white, and so are the two neighbouring boundary
cells beyond its ends (see Figure~\ref{iro-2}, left; all these cells are
coloured in electric blue). We say that such a side is \emph{in
	configuration} $\mathcal F$.

Suppose this is the left side. The cells $\mathbf A$ and $\mathbf B$
are necessarily grey. Every two consecutive cells in the second column
must contain a grey cell, otherwise there would be a white $2\times2$
cycle. If no two consecutive cells were grey, the second column would
therefore alternate between white and grey. Its two end cells are white,
which is impossible since $n$ is even. Thus $u\geq1$ -- contradiction.

\medskip
\noindent\textbf{ii)} $(c,b)=(0,1)$. A single non-corner boundary
deletion prevents at most two sides from being in configuration
$\mathcal F$. Hence at least two sides are in this configuration.
The previous argument gives a pair of consecutive grey cells next to
each of them. These pairs are distinct: for opposite sides they lie in
different rows or columns, and for neighbouring sides they have different
directions. Thus $u\geq2$, whereas (\ref{form-n}) gives $u+T=2$.
This is a contradiction again.

\medskip
\noindent\textbf{iii)} The difficult equality case is
$(c,b,u,T)=(0,2,0,1)$. Thus exactly two deleted cells lie on the boundary,
neither is a corner, no two deleted cells share a side, and the remaining
graph $\Gamma$ is a tree.

\begin{wrapfigure}{l}{0.58\textwidth}
	\includegraphics[width=0.58\textwidth]{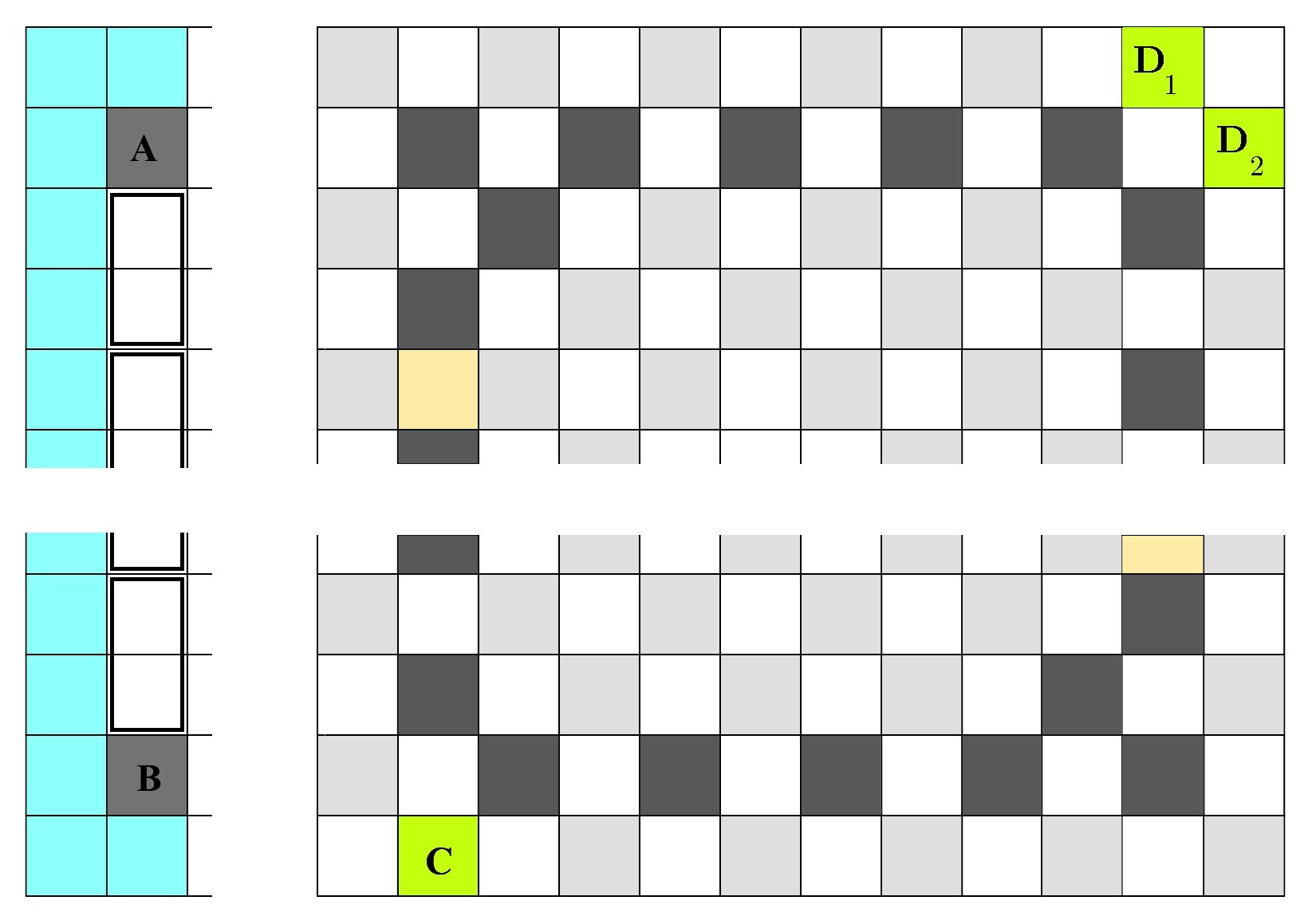}
	\caption{Supplementary figure for cases $n=6k$ or $n=6k+2$}
	\label{iro-2}
\end{wrapfigure}

\noindent\textbf{Step 1: locating the two boundary cells.}
No side can be in configuration $\mathcal F$. A non-corner boundary
deletion prevents its own side from being in this configuration, and can
prevent one further side only when it is immediately adjacent to a
corner. Since two deletions must account for all four sides, both are
immediately adjacent to corners, and those corners are opposite.

Use coordinates $(i,j)$ with $i$ counted from left to right and $j$ from
bottom to top. After applying a symmetry of the square, the two boundary
deletions are $\mathbf C=(1,2)$ and either
$\mathbf D=\mathbf D_1=(n,n-1)$ or
$\mathbf D=\mathbf D_2=(n-1,n)$. These are the two possibilities illustrated
in Figure~\ref{iro-2}, up to symmetry. In particular, $\mathbf C$ and
$\mathbf D$ have the same ordinary checkerboard colour. Let it be
light-grey.

\medskip
\noindent\textbf{Step 2: the diagonal-contact graph.}
Every $2\times2$ block contains a deleted cell. Because $u=0$, a block
contains at most two deleted cells, and if it contains two, they are
diagonally opposite. Let $r$ be the number of blocks containing two
deleted cells. The boundary deletions $\mathbf C,\mathbf D$ belong to two
blocks each, whereas every other deletion belongs to four. Counting
incidences gives $4s-4=(n-1)^2+r$. Since $3s=(n-1)^2+2$, we obtain
$r=s-2$.

Define a graph $K$ on vertex set $\mathcal X$ by joining deleted cells
which are diagonally opposite in a $2\times2$ block. Draw these edges as
segments between cell centres. They do not cross: crossing diagonals
would require four grey cells in a single block, contrary to $u=0$.

The graph $K$ is acyclic. Indeed, a simple cycle of $K$ would enclose a
white cell. To see this, take a leftmost vertex $(a,b)$ of the cycle.
Its two cycle neighbours are $(a+1,b-1)$ and $(a+1,b+1)$, so the cell
$(a+1,b)$ lies inside the cycle. It is white, since it shares a side with
$(a,b)$. The corner cells of the board are white and lie outside the
cycle. No edge of the surviving grid can cross a unit diagonal of $K$:
the interior of such a diagonal lies strictly inside an elementary grid
face, and its endpoints are deleted. Thus the white cell inside cannot
be connected to a corner outside, contrary to $T=1$.

Now $|V(K)|=s$ and $|E(K)|=r=s-2$, so $K$ consists of exactly two
connected components. If $\mathbf C$ and $\mathbf D$ were connected in
$K$, a simple diagonal path between them would lie in the interior of
the board except at its endpoints. It separates the two boundary arcs
between $\mathbf C$ and $\mathbf D$, each of which contains a white corner
cell. A path in $\Gamma$ between these arcs would have to cross the grey
diagonal path, which is again impossible. Consequently the two components
of $K$, denoted by $K_C,K_D$, contain $\mathbf C$ and $\mathbf D$,
respectively.

Every diagonal edge preserves checkerboard colour. Each component of
$K$ contains one of $\mathbf C,\mathbf D$, and these have the same colour.
Therefore $\mathcal X\subseteq Q$, where $Q$ is the full checkerboard
class of cells $(i,j)$ with $i+j$ odd. Let $H$ be the graph on $Q$ joining
diagonally opposite cells of each $2\times2$ block. Thus
$K=H[\mathcal X]$. Put $Y=Q\setminus\mathcal X$. Since every block contains
a deletion, its two $Q$-cells cannot both belong to $Y$. Hence $Y$ is an
independent set in $H$.

There is now a set of cells which are forced to be grey. Let $F$ consist
of the cells of $Q$ on the inner frame, namely the boundary of
$[2,n-1]^2$. Every cell of $F$ has two diagonal neighbours on an adjacent
outer side, at least one of which is white. In the corresponding
$2\times2$ block the other two cells lie outside $Q$ and are also white.
Thus the cell of $F$ must be grey.

The cells $(3,n-2)$ and $(n-2,3)$ are forced as well. The forced cell
$(2,n-1)$ has only one possible grey diagonal neighbour, $(3,n-2)$; its
other three diagonal neighbours are boundary cells different from
$\mathbf C,\mathbf D$. It cannot be isolated, since the two components of
$K$ already contain $\mathbf C$ and $\mathbf D$. Hence $(3,n-2)$ is grey.
The same argument at $(n-1,2)$ gives $(n-2,3)$.

For $n=6$, these forced cells already give a contradiction: the four grey
cells $(2,3),(3,4),(4,3),(3,2)$ form a cycle of $K$. Hence from now on we
may assume $n\geq8$.

Each of $\mathbf C,\mathbf D$ is a leaf of its component. Indeed,
$\mathbf C$ has only one grey diagonal neighbour, $(2,3)$, and the same
statement holds at $\mathbf D$. Hence $K_C-\mathbf C$ and
$K_D-\mathbf D$ are disjoint connected graphs contained in the closed
inner square. Label every cell of $F$ by $C$ or $D$ according as it
belongs to $K_C$ or $K_D$, and read the labels in cyclic order.

\begin{lem}\label{lem:twotransitions}
	The cyclic sequence of labels on $F$ changes exactly twice. One change
	occurs on the left or top side of the inner frame, and the other on the
	bottom or right side.
\end{lem}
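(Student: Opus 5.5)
The plan is to first show there are at least two label changes, then use the non-crossing property of diagonal paths (a planarity / Jordan-curve argument) to show there are at most two, and finally use the forced corner adjacencies of $F$ to pin down where the two changes sit.

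\textbf{At least two changes.} Both labels occur on $F$. The only grey diagonal neighbour of $\mathbf C=(1,2)$ is $(2,3)$, which lies on the left side of the inner frame. Hence $(2,3)$ is labelled $C$. Symmetrically, the unique grey diagonal neighbour of $\mathbf D$ is labelled $D$. This neighbour is $(n-1,n-2)$ on the right side if $\mathbf D=\mathbf D_1$, and $(n-2,n-1)$ on the top side if $\mathbf D=\mathbf D_2$. A cyclic $\{C,D\}$-sequence in which both letters appear changes an even, positive number of times, so there are at least two changes.

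\textbf{At most two changes.} Suppose there were at least four. Then we can pick cells $a,b,c,d\in F$, in this cyclic order, labelled $C,D,C,D$. Since $K_C-\mathbf C$ is connected, it contains a simple diagonal path from $a$ to $c$. Likewise $K_D-\mathbf D$ contains a simple diagonal path from $b$ to $d$. Both paths lie in the closed square $S$ spanned by the centres of the cells of $[2,n-1]^2$, and their endpoints lie on $\partial S$.
\begin{itemize}
\item The two paths are vertex-disjoint, since $K_C\cap K_D=\emptyset$.
\item Their edges do not cross, since the diagonals of $K$ are non-crossing, as shown above.
\end{itemize}
The first path is a simple arc in the disc $S$ joining two boundary points, so it separates $S$ into two regions. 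The points $b$ and $d$ lie in the interiors of the two complementary boundary arcs, because the order is alternating. By the Jordan curve theorem, any arc from $b$ to $d$ inside $S$ meets the first path, which is a contradiction. Hence there are exactly two changes. I expect this planarity step to be the main point requiring care. One must check that the paths really stay inside $S$, which follows because $K_C-\mathbf C$ and $K_D-\mathbf D$ lie in the closed inner square. One must also check that the endpoints are genuinely on $\partial S$, which holds because they are frame cells.

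\textbf{Locating the changes.} Traverse $F$ along two arcs from $(2,3)$ to the $D$-labelled neighbour of $\mathbf D$: arc $\alpha$ goes up the left side and along the top, and arc $\beta$ goes down through the bottom and up the right side. Each arc starts at a $C$ and ends at a $D$, so each contains at least one change. Since there are exactly two changes, each arc contains exactly one. It remains to check that the change on $\alpha$ is on the left or top side, and the change on $\beta$ is on the bottom or right side. The only way this could fail is a change across a frame corner into the "wrong" side.
\begin{itemize}
\item On $\beta$, the first step leaves $(2,3)$ for $(3,2)$. These cells are diagonal neighbours and both are forced grey (both lie in $F$), so they are joined in $K$ and share the label $C$. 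Thus the change on $\beta$ is not at the bottom-left corner, so it lies on the bottom or right side.
\item If $\mathbf D=\mathbf D_1$, the last step of $\alpha$ passes from $(n-2,n-1)$ to $(n-1,n-2)$. These are again forced grey diagonal neighbours, so no change occurs there. Hence the change on $\alpha$ lies on the left or top side.
\item If $\mathbf D=\mathbf D_2$, the endpoint $(n-2,n-1)$ is on the top side, so $\alpha$ stays within the left and top sides, as required. On $\beta$, the final step from $(n-1,n-2)$ to $(n-2,n-1)$ carries no change, so the change on $\beta$ lies on the bottom or right side.
\end{itemize}
The frame corners $(2,n-1)$ and $(n-1,2)$ lie in $Q$, since $n$ is even. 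They lie on the transitions left/top and bottom/right, respectively, so they cause no ambiguity in the statement.
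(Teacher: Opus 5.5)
Your proof is correct and follows the paper's route. You rule out interlacing labels using disjoint, non-crossing diagonal paths in the inner square. You then show that the label cannot change across the corner pairs $(2,3),(3,2)$ and $(n-2,n-1),(n-1,n-2)$, so each frame arc between them carries exactly one change. You are only slightly more explicit: you treat $\mathbf D_1$ and $\mathbf D_2$ separately, and you note that the corners $(2,n-1),(n-1,2)$ lie in $Q$, where the paper instead invokes the forced cells $(3,n-2),(n-2,3)$.
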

\begin{proof}
	The labels cannot interlace. Otherwise four cells would occur in cyclic
	order with labels $C,D,C,D$. A path joining the two $C$-cells in
	$K_C-\mathbf C$ and a path joining the two $D$-cells in
	$K_D-\mathbf D$ would have alternating endpoints on the boundary of a
	square and would have to intersect. This contradicts the fact that the
	two components are disjoint. Both labels occur, so each occupies one
	consecutive part of the frame. There are exactly two changes.
	
	The cells $(2,3),(3,2)$ are diagonally joined and both have label $C$.
	Similarly, $(n-2,n-1),(n-1,n-2)$ both have label $D$. At the other two
	corners, the additional forced cells $(3,n-2),(n-2,3)$ join the frame
	cells around each corner, so the label does not change there either.
	Each of the two frame arcs from the $C$-corner to the $D$-corner must
	contain a change. One arc runs along the left and top sides, and the
	other along the bottom and right sides.
\end{proof}

A change on a straight side has a useful local interpretation. For
example, suppose it occurs on the left side between $(2,j-1)\in K_C$
and $(2,j+1)\in K_D$. Then both $(1,j)$ and $(3,j)$ belong to $Y$:
if either were grey, it would join the two components. We call $(3,j)$
the \emph{separator} corresponding to this change. The frame cell
$(2,j)$ between them lies outside $Q$ and is white. The analogous
statement holds on the other sides. Thus there are exactly two
transitions, with their corresponding inward white separator cells.
The light-yellow cells in Figure~\ref{iro-2} indicate the positions of
the transitions on the frame.

As a local check, exactly one of $(3,4),(4,3)$ is grey. At least one is
grey because their $2\times2$ block must contain a deletion, while both
cannot be grey because together with $(2,3),(3,2)$ they would form a
cycle of $K$. Likewise, exactly one of $(n-2,n-3),(n-3,n-2)$ is grey.

\medskip
We now finish the contradiction using only parity and the handshaking
lemma. For an interior cell $z\notin Q$, its four side-neighbours belong
to $Q$ and form a $4$-cycle in $H$.

\begin{lem}\label{lem:localpattern}
	If these four neighbours contain grey cells from both $K_C$ and $K_D$,
	then they occur cyclically in the form $C,Y,D,Y$, up to rotation and
	interchange of $C,D$. In particular, the two white neighbours are
	opposite.
\end{lem}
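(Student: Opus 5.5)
We are given an interior cell $z\notin Q$. Its four side-neighbours $z_1,z_2,z_3,z_4$, listed in cyclic order, lie in $Q$. Consecutive ones among them are diagonally opposite in a $2\times2$ block containing $z$, so they form a $4$-cycle in $H$. Each of the four $2\times 2$ blocks containing $z$ consists of $z$, two consecutive neighbours $z_i,z_{i+1}$, and one further cell outside $Q$. Since $\mathcal X\subseteq Q$, the only possible deletions in that block are $z_i$ and $z_{i+1}$. So at least one of each consecutive pair is grey, which is the statement that $Y$ is independent in $H$. Consequently, around the $4$-cycle no two consecutive neighbours are white.

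The plan is to label each neighbour $C$, $D$ or $Y$ and use two facts.
\begin{itemize}
\item[(a)] No two consecutive neighbours are both $Y$.
\item[(b)] No two consecutive neighbours carry the labels $C$ and $D$. Indeed, consecutive grey neighbours are joined by an edge of $K=H[\mathcal X]$ and hence lie in the same component, while $K_C$ and $K_D$ are distinct components by the preceding discussion.
\end{itemize}

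We then do a short case analysis on the pattern. By hypothesis both $C$ and $D$ occur among the four neighbours. Take a $C$-neighbour. By (b), both of its cycle-neighbours are $C$ or $Y$. Suppose neither of them is $Y$. Then both are $C$, and by (b) again the fourth cell is $C$ or $Y$, so no $D$ appears at all, a contradiction. Hence some $Y$ lies adjacent to a $C$. Apply the same reasoning from a $D$-neighbour.

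The direct route is to enumerate cyclic words of length $4$ over $\{C,D,Y\}$ that contain both $C$ and $D$ and satisfy (a) and (b). Every $C$--$D$ adjacency is forbidden, so each $C$ and each $D$ must be separated by a $Y$ on both sides around the cycle. With only four positions and at least one $C$ and one $D$, the two arcs between them must each contain at least one $Y$. This forces exactly one $C$, one $D$ and two $Y$'s, arranged as $C,Y,D,Y$. The two $Y$'s are then opposite, which is also consistent with (a). Up to rotation and swapping $C$ with $D$, this is the only possibility.

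The only point needing care, rather than a genuine obstacle, is confirming that the four neighbours really form a cycle in $H$ with the blocks arranged as described. This requires $z$ to be interior, which is given: for interior $z$, all four neighbours and all four surrounding blocks exist. It also requires that the third and fourth cells of each block are non-$Q$ cells, so that they cannot be deleted. Both facts follow directly from the checkerboard colouring and from the fact, established earlier, that $\mathcal X\subseteq Q$.
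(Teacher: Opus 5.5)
Your proposal is correct and is essentially the paper's argument: consecutive grey neighbours are joined by an edge of $K$, so a $C$-cell and a $D$-cell cannot be adjacent on the $4$-cycle and must be opposite, and then each remaining neighbour is adjacent to both and must lie in $Y$. Your fact (a), that $Y$ is independent, is a harmless extra that the conclusion does not use.
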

\begin{proof}
	Grey vertices from different components cannot be adjacent in the
	$4$-cycle, so they must be opposite. Either remaining vertex, if grey,
	would be adjacent to both and would join their components. Thus both
	remaining vertices belong to $Y$.
\end{proof}

Define a graph $S$ on $Y$ as follows. For every interior cell $z\notin Q$
whose four $Q$-neighbours contain both component labels, join the two
opposite white neighbours given by Lemma~\ref{lem:localpattern}.
Geometrically this edge is the straight segment through $z$.

\begin{lem}
	Every interior vertex of $S$ has even degree. A boundary vertex has
	degree $0$ or $1$, and degree $1$ occurs precisely at a transition on
	the inner frame. Consequently $S$ has exactly two odd-degree vertices.
\end{lem}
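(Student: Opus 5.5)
The degree of a vertex $y\in Y$ in $S$ is the number of cells $z\notin Q$, each a side-neighbour of $y$, such that the $4$-cycle of $Q$-neighbours of $z$ (all interior cells $z$) carries both labels. This is a local count around $y$, so I will compute it from the labels of the diagonal neighbours of $y$.

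\emph{Interior vertices.} Let $y\in Y$ have all four side-neighbours $z_1,\dots,z_4$ interior, indexed cyclically (east, north, west, south). The four diagonal neighbours $w_1,\dots,w_4$ of $y$ lie in $Q$, and $w_k$ sits between $z_k$ and $z_{k+1}$. The $Q$-neighbours of $z_k$ are $y$, $w_{k-1}$, $w_k$ and the cell beyond $z_k$ opposite to $y$.

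Since $y$ is white, Lemma~\ref{lem:localpattern} says $z_k$ produces an $S$-edge at $y$ exactly when $w_{k-1}$ and $w_k$ are both grey with different labels. In that case the fourth neighbour of $z_k$ is automatically white and the edge goes straight through $z_k$. If only one of $w_{k-1},w_k$ is grey, or both are grey with the same label, then either both labels fail to occur among the neighbours of $z_k$, or they occur in a non-admissible pattern. The non-admissible patterns are excluded by Lemma~\ref{lem:localpattern}, because that lemma shows mixed labels force the pattern $C,Y,D,Y$.

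Now $Y$ is independent in $H$ and $y\in Y$. Each $2\times2$ block containing $y$ and some $w_k$ must contain a deletion, and its two cells outside $Q$ are white. Hence every $w_k$ is grey. So $\deg_S(y)$ equals the number of label changes in the cyclic sequence $\ell(w_1),\dots,\ell(w_4)$, and the number of changes in a cyclic binary sequence is even.

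\emph{Boundary vertices.} Take $y\in Y$ on the outer boundary or on the inner frame; the relevant case is the inner frame $[2,n-1]^2$ boundary. The frame cells of $Q$ are all forced grey, so $F\cap Y=\emptyset$. The $Y$-vertices near the edge are therefore outer boundary cells such as $(1,j)$, and the inward cells $(3,j)$ adjacent to the frame behave like interior vertices, with the frame cells serving as their grey diagonal neighbours.

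For an outer boundary cell $y=(1,j)$, only one side-neighbour is interior, namely $z=(2,j)$. Its $Q$-neighbours are $(1,j)=y$, $(3,j)$, $(2,j-1)$ and $(2,j+1)$, the last two being frame cells. So $\deg_S(y)\le1$, with equality exactly when $(2,j\pm1)$ carry different labels. That is precisely a transition on the frame, as described after Lemma~\ref{lem:twotransitions}, with $(3,j)$ the separator.

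Corners of the frame need a separate check, and I expect this to be the main obstacle. The outer cells diagonally adjacent to the frame corners, $\mathbf C$ and $\mathbf D$, and the cells next to the other two corners must each be shown to have degree $0$. I would use that the labels do not change around the corners, which is the second part of Lemma~\ref{lem:twotransitions}, together with the forced cells $(3,n-2)$ and $(n-2,3)$.

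Lemma~\ref{lem:twotransitions} gives exactly two transitions. Hence there are exactly two boundary vertices of degree $1$, all other vertices have even degree, and $S$ has exactly two odd-degree vertices.
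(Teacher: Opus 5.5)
Your argument for interior vertices of $Y$ (all four diagonal neighbours grey, an $S$-edge through $z_k$ exactly when $w_{k-1},w_k$ carry different labels, hence an even number of changes) and for generic boundary vertices is the same as the paper's. The gap is that the boundary case is only planned, not carried out, and it stops exactly where your stated rule fails. That rule says a boundary cell has one interior side-neighbour, and its two remaining $Q$-neighbours are consecutive frame cells. It fails in two kinds of position.

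\begin{itemize}
\item The two board corners in $Q$, $(1,n)$ and $(n,1)$, have no interior side-neighbour at all, so they have degree $0$.
\item The white cell across the corner from $\mathbf C$, namely $(2,1)$, has inward diagonal neighbours $\mathbf C=(1,2)$ and $(3,2)$, not two cells of $F$. The same holds for the analogous cell at $\mathbf D$, i.e.\ whichever of $(n-1,n),(n,n-1)$ is not $\mathbf D$.
\end{itemize}

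At $(2,1)$ what you need is that $\mathbf C,(2,3),(3,2)$ is a path in $K$. So both inward neighbours lie in $K_C$, no $S$-edge arises, and the degree is $0$; the forced cells $(3,n-2),(n-2,3)$ play no role here. Note that $\mathbf C,\mathbf D$ themselves are grey and are not vertices of $S$. The boundary cells beside the frame corners $(2,n-1),(n-1,2)$, such as $(1,n-2)$ or $(3,n)$, are not exceptional. Their inward diagonal neighbours are consecutive cells of $F$, so the generic rule already covers them, and whether they have degree $0$ does not affect the count.

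You also need to justify the step from ``exactly two changes on $F$'' to ``exactly two boundary vertices of degree $1$'': every change must be witnessed by a boundary cell of $Y$. Each cyclically consecutive pair of $Q$-cells of $F$ on a straight side, say $(2,j-1),(2,j+1)$, has the white witness $(1,j)$. The only consecutive pairs with no witness are $(2,3),(3,2)$ and $(n-2,n-1),(n-1,n-2)$. These are joined in $K$, so no change can occur there, which is the corner part of Lemma~\ref{lem:twotransitions}. With these checks written out, your proof coincides with the paper's.
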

\begin{proof}
	Let $y=(i,j)\in Y$ be an interior cell. Since all $Q$-cells on the inner
	frame are grey, $3\leq i,j\leq n-2$. Since $Y$ is independent in $H$,
	all four diagonal neighbours of $y$ are grey. Read their component
	labels cyclically around $y$. An edge of $S$ is incident with $y$
	exactly when two consecutive labels are different. The cell through
	which this edge passes is interior, and the local pattern lemma applies.
	Thus $\deg_S(y)$ is the number of changes in a cyclic binary sequence,
	and is even.
	
	If $y\in Y$ is a non-corner boundary cell, only one edge of $S$ can be
	incident with it: the edge directed into the board. It is present exactly
	when the two inward diagonal neighbours of $y$ have different component
	labels. Except for the white cell across a corner from $\mathbf C$ or
	$\mathbf D$, these neighbours are consecutive cells of $F$. At the
	exceptional positions they belong to the same component: for instance,
	the neighbours of $(2,1)$ are $\mathbf C$ and $(3,2)$, both in $K_C$.
	The analogous statement holds at $\mathbf D$. The two corners in $Q$
	have degree zero, since no segment through an interior cell can have a
	corner as its endpoint. Lemma~\ref{lem:twotransitions} now gives exactly
	two boundary vertices of degree $1$, and no other odd-degree vertices.
\end{proof}

By the handshaking lemma applied to each connected component of $S$,
its two odd-degree vertices must belong to the same component. Finally,
split $Q$ into $Q_A$, consisting of cells with $i$ odd and $j$ even, and
$Q_B$, consisting of cells with $i$ even and $j$ odd. Every edge of $S$
joins cells differing by $(\pm2,0)$ or $(0,\pm2)$. Hence each connected
component of $S$ is contained entirely in $Q_A$ or entirely in $Q_B$.

But the odd-degree vertex corresponding to the left/top transition lies
on the left boundary $i=1$ or the top boundary $j=n$; since $n$ is even,
it belongs to $Q_A$. The other lies on the bottom boundary $j=1$ or the
right boundary $i=n$, and belongs to $Q_B$. They therefore cannot belong
to the same connected component of $S$. This is the required
contradiction.

\section{Cases $n-4=6k\pm 1$}
This section needs almost no words. In sufficies to present an example for $n=9$, an examples for $n=11$ (Fig. \ref{ex2}) and $15$ (Fig. \ref{fig:global}, left, dashed contour). The last two have identical pattern in bottom-and-left $2\times 2$ strip.  Thus, the induction  $n\rightarrow n+6$ using a local surgery presented in Fig. \ref{fig:global} works in both cases.
\begin{figure}[!htbp]
	\centering
	
	\begin{tikzpicture}[x=4.5mm,y=4.5mm]
		
		
		\newcommand{\DrawSmallBoard}[3]{%
			\begin{scope}[shift={(#2,{7-#1})}]
				\fill[white] (0,0) rectangle (#1,#1);
				
				\foreach \r/\c in {#3}{
					\fill[gray!100]
					({\c-1},{#1-\r})
					rectangle
					({\c},{#1-\r+1});
				}
				
				\draw[step=1,black!75,line width=.35pt]
				(0,0) grid (#1,#1);
				
				\draw[black!85,line width=.5pt]
				(0,0) rectangle (#1,#1);
				
				\node[font=\small] at ({#1/2},-.65) {$#1$};
			\end{scope}%
		}
		
		\DrawSmallBoard{2}{0}{
			1/1
		}
		
		\DrawSmallBoard{3}{3}{
			1/1,
			2/2
		}
		
		\DrawSmallBoard{4}{7}{
			1/3,
			2/2,
			3/1,3/3
		}
		
		\DrawSmallBoard{5}{12}{
			1/1,
			2/2,2/4,
			3/3,
			4/2,4/4
		}
		
		\DrawSmallBoard{6}{18}{
			2/2,2/4,2/6,
			3/3,3/5,
			4/2,
			5/3,5/5,
			6/2,6/6
		}
		
		\DrawSmallBoard{7}{25}{
			1/5,
			2/2,2/4,2/6,
			3/3,
			4/2,4/4,4/6,
			5/1,5/5,
			6/2,6/4,6/6
		}
		
	\end{tikzpicture}
	
	\caption{The needed configuration of grey squares
		for $2\leq n\leq7$.}
	\label{ex1}
	
\end{figure}

\newcommand{\DrawSquareExample}[3]{%
	\begin{tikzpicture}[x=.42cm,y=.42cm,baseline=(current bounding box.south)]
		\foreach \r/\c in {#2}{
			\fill[gray!100] (\c-1,{#1-\r}) rectangle (\c,{#1-\r+1});
		}
		\draw[step=1,black!80,line width=.5pt] (0,0) grid (#1,#1);
		\node at ({#1/2},-0.9) {$#3$};
	\end{tikzpicture}%
}

\begin{figure}[ht]
	\centering
	
	\begin{minipage}{0.28\textwidth}
		\centering
		\DrawSquareExample{8}{
			1/1,1/7,
			2/2,2/4,2/6,
			3/3,3/7,
			4/2,4/4,4/6,
			5/7,
			6/2,6/4,6/6,
			7/1,7/3,7/5,7/7
		}{8}
	\end{minipage}
	\hfill
	\begin{minipage}{0.31\textwidth}
		\centering
		\DrawSquareExample{9}{
			1/1,
			2/2,2/4,2/6,2/8,
			3/3,3/7,
			4/2,4/4,4/6,4/8,
			5/5,
			6/2,6/4,6/6,6/8,
			7/3,7/7,
			8/2,8/4,8/6,8/8
		}{9}
	\end{minipage}
	\hfill
	\begin{minipage}{0.37\textwidth}
		\centering
		\DrawSquareExample{11}{
			2/2,2/4,2/6,2/8,2/10,
			3/3,3/7,3/9,
			4/2,4/4,4/6,4/10,
			5/1,5/7,5/9,
			6/2,6/4,6/6,6/10,
			7/3,7/5,7/7,7/9,
			8/1,8/10,
			9/3,9/5,9/7,9/9,
			10/2,10/4,10/6,10/8,10/10
		}{11}
	\end{minipage}
		\caption{The needed configuration of grey squares for $n=8,9,11$.}
		\label{ex2}
\end{figure}

\begin{figure}[!htbp]
	\centering
	\begin{minipage}[c]{.49\linewidth}\centering
		\begin{tikzpicture}[x=3.85mm,y=3.85mm,baseline=(current bounding box.center)]
			\fill[white] (0,0) rectangle (21,21);
			\foreach \r/\c in {1/9,2/2,2/4,2/6,2/8,2/10,2/12,2/14,3/3,3/7,3/11,3/13,4/2,4/4,4/6,4/8,4/10,4/14,5/5,5/11,5/13,6/2,6/4,6/6,6/8,6/10,6/14,7/3,7/7,7/11,7/13,8/2,8/4,8/6,8/8,8/10,8/14,9/11,9/13,10/2,10/4,10/6,10/8,10/10,10/14,11/3,11/5,11/7,11/9,11/11,11/13,12/1,12/14,13/3,13/5,13/7,13/9,13/11,13/13,14/2,14/4,14/6,14/8,14/10,14/12,14/14}{\fill[gray!100] (\c-1,21-\r) rectangle (\c,21-\r+1);}
			\foreach \r/\c in {2/16,2/18,2/20,3/17,3/19,4/16,4/20,5/17,5/19,6/16,6/20,7/17,7/19,8/16,8/20,9/17,9/19,10/16,10/20,11/17,11/19,12/16,12/20,13/17,13/19,14/16,14/20,15/17,15/19,16/2,16/4,16/6,16/8,16/10,16/12,16/14,16/16,16/20,17/3,17/5,17/7,17/9,17/11,17/13,17/15,17/17,17/19,18/1,18/20,19/3,19/5,19/7,19/9,19/11,19/13,19/15,19/17,19/19,20/2,20/4,20/6,20/8,20/10,20/12,20/14,20/16,20/18,20/20}{\fill[blue!32] (\c-1,21-\r) rectangle (\c,21-\r+1);}
			\foreach \i in {0,...,21}{\draw[black!60,line width=.22pt] (\i,0)--(\i,21);\draw[black!60,line width=.22pt] (0,\i)--(21,\i);}
			\draw[black!70,line width=.5pt] (0,0) rectangle (21,21);
			\draw[densely dashed,black!78,line width=1.4pt] (0,6) rectangle (15,21);
			\draw[orange!95!black,line width=1.0pt] (0.5,9.5) circle[radius=.38];
			\draw[orange!95!black,line width=1.0pt] (3.5,7.5) circle[radius=.38];
			\draw[orange!95!black,line width=1.0pt] (3.5,5.5) circle[radius=.38];
		\end{tikzpicture}
		
		\small After an addition, before surgery
	\end{minipage}\hfill
	\begin{minipage}[c]{.49\linewidth}\centering
		\begin{tikzpicture}[x=3.85mm,y=3.85mm,baseline=(current bounding box.center)]
			\fill[white] (0,0) rectangle (21,21);
			\foreach \r/\c in {1/9,2/2,2/4,2/6,2/8,2/10,2/12,2/14,3/3,3/7,3/11,3/13,4/2,4/4,4/6,4/8,4/10,4/14,5/5,5/11,5/13,6/2,6/4,6/6,6/8,6/10,6/14,7/3,7/7,7/11,7/13,8/2,8/4,8/6,8/8,8/10,8/14,9/11,9/13,10/2,10/4,10/6,10/8,10/10,10/14,11/3,11/5,11/7,11/9,11/11,11/13,12/2,12/14,13/3,13/5,13/7,13/9,13/11,13/13,14/2,14/6,14/8,14/10,14/12,14/14,15/3,15/5}{\fill[gray!100] (\c-1,21-\r) rectangle (\c,21-\r+1);}
			\foreach \r/\c in {2/16,2/18,2/20,3/17,3/19,4/16,4/20,5/17,5/19,6/16,6/20,7/17,7/19,8/16,8/20,9/17,9/19,10/16,10/20,11/17,11/19,12/16,12/20,13/17,13/19,14/16,14/20,15/17,15/19,16/2,16/6,16/8,16/10,16/12,16/14,16/16,16/20,17/3,17/5,17/7,17/9,17/11,17/13,17/15,17/17,17/19,18/1,18/20,19/3,19/5,19/7,19/9,19/11,19/13,19/15,19/17,19/19,20/2,20/4,20/6,20/8,20/10,20/12,20/14,20/16,20/18,20/20}{\fill[blue!32] (\c-1,21-\r) rectangle (\c,21-\r+1);}
			\foreach \i in {0,...,21}{\draw[black!60,line width=.22pt] (\i,0)--(\i,21);\draw[black!60,line width=.22pt] (0,\i)--(21,\i);}
			\draw[black!70,line width=.5pt] (0,0) rectangle (21,21);
			\draw[teal!85!black,line width=1.0pt] (1.5,9.5) circle[radius=.38];
			\draw[teal!85!black,line width=1.0pt] (2.5,6.5) circle[radius=.38];
			\draw[teal!85!black,line width=1.0pt] (4.5,6.5) circle[radius=.38];
		\end{tikzpicture}
		
		\small After surgery.
	\end{minipage}
	\caption{The full-board operation $15\longrightarrow21$. Orange circles mark the three deleted cells to be restored, and green the three new deletions.}
	\label{fig:global}
\end{figure}


\begin{thebibliography}{99}

\bibitem{alanko}{\sc S. Alanko, S. Crevals, A. Isopoussu, P.R.J. \"Osterg\aa rd, V. Pettersson}, Computing the Domination Number of Grid Graphs,, {\it The Electronic Journal of Combinatorics} 18 (2011), \# P141.

\bibitem{crevals}{\sc S. Crevals, P.R.J. \"Osterg\aa rd}, Independent domination of grids. {\it Discrete Math.} {\bf 338} (8) (2015), 1379--1384.

\bibitem{golomb}{\sc S.W. Golomb}, {\it Polyominoes: Puzzles, Patterns, Problems, and Packings}, Revised and Expanded Second Edition Cambridge University Press, 1998.

\bibitem{goncalves}{\sc D. Gon\c alves, A. Pinlou,  M. Rao, S. Thomass\'e}, The domination number of grids. {\it SIAM J. Discrete Math.} {\bf 25} (3) (2011), 1443--1453.


\bibitem{khovanova}{\sc A. Buchanan, T. Khovanova, A. Ryba},
Saturated domino coverings, {\it Geombinatorics} {\bf 24} (2) (2014), 52–-67.

\bibitem{LienFuShih}
{\sc M.-Y. Lien, H.-L. Fu and C.-H. Shih},
\emph{The decycling number of \(P_m\square P_n\)},
Discrete Mathematics, Algorithms and Applications \textbf{6} (2014), no.~3, Article 1450033.

\bibitem{Luccio}{\sc F.L. Luccio}, Almost exact minimum feedback vertex sets in meshes and butter-
flies, Information Processing Letters 66 (1998) 59–64.


\bibitem{MadelaineStewart}
{\sc F.~R. Madelaine and I.~A. Stewart},
\emph{Improved upper and lower bounds on the feedback vertex numbers of grids and butterflies},
Discrete Mathematics \textbf{308} (2008), 4144--4164.

\bibitem{sloane}{\sc N.J.A. Sloane}, The On-Line Encyclopedia of Integer Sequences. {\it Notices Amer. Math. Soc.} {\bf 65}(9),  1062--1074 (2018).

\bibitem{oeis}{\sc OEIS}, The Online Encyclopedia of Integer Sequences, Sequences A010060, A000120, A018819, \url{http://www.research.att.com/~njas/sequences/}


\bibitem{weisstein}{\sc Weisstein, Eric W.}, Feedback Vertex Set Number, \url{https://mathworld.wolfram.com/FeedbackVertexSetNumber.html}
\end{thebibliography}
\end{document}